\documentclass[14pt]{amsart}
\usepackage{amssymb}
\usepackage{amsfonts}
\usepackage{amsmath}
\usepackage{mathrsfs}
\usepackage{amsthm}
\newtheorem{theorem}{Theorem}[section]
\newtheorem{lemma}[theorem]{Lemma}

\theoremstyle{definition}

\theoremstyle{remark}
\newtheorem{remark}{Remark}

\newcommand{\x}{\tilde{x}}
\newcommand{\R}{\mathbb{R}}

\newcommand{\N}{\mathbb{N}}

\newcommand{\MS}{\mathbb{S}}
\newcommand{\F}{\mathcal F}

\newcommand{\innerp}[1]{\langle {#1} \rangle}


\newcommand{\argmin}[1]{\mathop{\rm argmin}\limits_{#1}}
\newcommand{\abs}[1]{\left\lvert#1\right\rvert}

\title{The lower bound of the PCM quantization error in high dimension}\thanks{Z. Xu is supported by the National Natural Science Foundation of China (11171336 and 11331012) and by the Funds for Creative
       Research Groups of China (Grant No. 11021101).}
\author{Heng Zhou}
\address{School of Sciences, Tianjin Polytechnic University\\ Tianjin 300160, China;\newline  {\tt Email: zhouheng7598@sina.com.cn}}
\author{Zhiqiang Xu}
\address{LSEC, Institute of Computational Mathematics, Academy of Mathematics and System Sciences,
Chinese Academy of Sciences, Beijing 100190, China; {\tt Email: xuzq@lsec.cc.ac.cn}}
\date{}
\textwidth=140mm \textheight=220mm \oddsidemargin=1.1cm
\begin{document}
\maketitle
\begin{center}
\parbox{11cm}{{\it Abstract}\quad
In this note, we investigate the performance of  the PCM scheme with linear quantization rule for quantizing  unit-norm tight frame expansions for $\R^d$  without the White Noise Hypothesis. In \cite{WX}, Wang and Xu showed  that for asymptotically equidistributed unit-norm tight frame the PCM quantization error has an upper bound ${\mathcal O}(\delta^{(d+1)/2})$ and they conjecture the upper bound is sharp. In this note,
we confirm the conjecture with employing  the asymptotic estimate of the Bessel functions.}
\end{center}
\begin{center}
\vspace{0.2cm}
\parbox{11cm}{{\it Key words and phrases}\quad Unit-norm tight frames; PCM quantization error; Bessel functions; Orthogonal polynomials
}
\end{center}
\vspace{0.2cm}
\begin{center}
\parbox{11cm}{{\it AMS Subject Classification 2000}\quad
42C15; 33C10; 42A05}
\end{center}
\mbox{}\\[10pt]
\section{Introduction}

In signal processing, one of the primary goals is to  find a  digital  representation for a given signal that is suitable for storage, transmission, and recovery.
  We assume that the signal $x$ is an element of a finite-dimensional Hilbert space ${\mathbb H}=\mathbb{R}^{d}$. One often begins to expand $x$ over a dictionary $\mathcal{F}=\{e_{j}\}_{j=1}^{N}$, i.e.,
$$x=\sum_{j=1}^{N}c_{j}e_{j},$$
where $c_{j}$ are real numbers. We say $\mathcal{F}$ is a {\em tight  frame} of $\mathbb{R}^{d}$ if
 $$
 x=\frac{d}{N}\sum_{j=1}^N\innerp{x,e_j}e_j
 $$
 holds for all $x\in \R^d$. The tight frame is called {\em unit-norm} if $\|e_j\|_2=1$ holds for all $1\leq j\leq N$.
In the digital domain  the coefficients $x_{j}=\innerp{ x, e_{j}}$  must be mapped to a discrete set of values $\mathcal{A}$ which is called the quantization alphabet. The simplest way for such a mapping is the Pulse Code Modulation (PCM) quantization scheme, which has $\mathcal{A}=\delta\mathbb{Z}$ with $\delta>0$ and  the mapping is done by the function
$$
Q_{\delta}(t):=\argmin{r\in\mathcal{A}}|t-r|=\delta\left\lfloor\frac{t}{\delta}+\frac{1}{2}\right\rfloor.
$$
 Thus in practical applications we in fact have only a quantized representation
 $$
q_{j}:=Q_{\delta}(\innerp{x, e_{j}}), \qquad j=1, \ldots, N
 $$
  for each $x\in\mathbb{R}^{d}$. The linear  reconstruction is
$$
\tilde{x}_{\mathcal{F}}=\frac{d}{N}\sum_{j=1}^{N} q_{j}e_{j}.
$$
Naturally we are interested in the error for this reconstruction, i.e.
 $$
 E_{\delta}(x, \mathcal{F}):=\| x-\tilde{x}_{\mathcal{F}}\|,
 $$
  where $\|\cdot\|$ is $\ell_{2}$ norm.
  To simplify the investigation of  $E_{\delta}(x, \mathcal{F})$, one employs  the {\em White Noise Hypothesis } (WNH)  in this area (see \cite{GoVeTh98,Benn48,BePoYi06,BePoYi06II,BorWan09, JiWaWa07}), which asserts that the quantization error sequence $\{x_j-q_j\}_{j=1}^N$ can be modeled as
 an independent sequence of i.i.d. random variables that are uniformly distributed on the
  interval $(-\delta/2, \delta/2)$. Under  the WNH, one can obtain the mean square error
$$
     MSE\,\,=\,\,{\mathbb E}(\|x-\x_\F\|^2)\,\,=\,\,\frac{d^2\delta^2}{12 N}.
$$
The result implies that the MES of $E_\delta(x,{\mathcal F})$ tends to $0$ with $N$ tending to infinity. However, as pointed out in  \cite{BorWan09, JiWaWa07},  the WNH  only asymptotically holds for fine quantization  (i.e. as $\delta$
tends to 0) under rather general conditions. So, for a fixed $x$, one is  interested in whether $E_{\delta}(x, \mathcal{F})$  really tending to $0$ without WNH. The result in \cite{WX} gives a solution for the case where $d=2$ which shows that for some $x\in \R^2$ the quantization error $E_{\delta}(x, \mathcal{F})$ does not diminish to $0$ with $N$ tending to infinity.
Naturally, one would like to know whether it is possible to extend the result to higher dimension.
 In \cite{WX}, Wang and Xu  investigate the case where $\F$ is the {\em asymptotically equidistributed  unit-norm tight frame} in $\R^d$.
 A sequence of finite sets $A_m \subset \MS^{d-1}$ with cardinality
$N_m = \# A_m$ is said to be {\em asymptotically equidistributed} on $\MS^{d-1}$ if
for any piecewise continuous function $f$ on $\MS^{d-1}$ we have
$$
\lim_{m\rightarrow\infty} \frac{1}{N_m}\sum_{v\in A_m}
f(v)\,\,=\,\,\int_{{z}\in \MS^{d-1}}f({z})d\nu,
$$
where $f$ are  piecewise continuous functions on $\MS^{d-1}$ and
$d\nu$ denotes the normalized Lebesgue measure on $\MS^{d-1}$.
Then the following theorem presents an upper bound for $\lim_{m\rightarrow\infty}E_\delta(x,\F_m)$.

\begin{theorem}\cite{WX}\label{th:lowbound}
  Assume that $\F_m$ are asymptotically equidistributed  unit-norm tight frames in $\R^{d}$. Then for any $x\in \R^{d}$ we have
    $$
      \lim_{m\rightarrow\infty}E_\delta(x,\F_m)\leq C_d \frac{\delta^{(d+1)/2}}{r^{(d-1)/2}},
    $$
where $r=\|x\|$ and $C_d$ is a constant depending on $d$.
\end{theorem}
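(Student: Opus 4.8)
My plan is to pass to the limit $m\to\infty$ first, turning the frame average into an integral over $\MS^{d-1}$, and then to extract the size of that integral by Fourier analysis. Since each $\F_m$ is a tight frame, the reconstruction error is
$$x-\x_{\F_m}=\frac{d}{N_m}\sum_{j=1}^{N_m}\psi\big(\innerp{x,e_j}\big)\,e_j,\qquad \psi(s):=s-\delta\Big\lfloor \frac{s}{\delta}+\frac12\Big\rfloor,$$
where $\psi$ is the $\delta$-periodic sawtooth taking values in $[-\delta/2,\delta/2)$. The vector field $z\mapsto \psi(\innerp{x,z})\,z$ is piecewise continuous (its jumps lie only on the null set where $\innerp{x,z}\in\delta(\Z+\tfrac12)$), so I would apply the equidistribution hypothesis coordinatewise to obtain
$$\lim_{m\to\infty}\big(x-\x_{\F_m}\big)=d\int_{\MS^{d-1}}\psi\big(\innerp{x,z}\big)\,z\,d\nu(z)=:I.$$
A rotation $R$ fixing $x$ leaves both $\innerp{x,z}$ and $d\nu$ invariant, so $RI=I$ and $I$ must be parallel to $x$; hence $\|I\|=|\lambda|$ with $\lambda=\innerp{I,x}/r$, and after integrating out the angular variables and setting $t=\innerp{x,z}/r=\cos\theta$,
$$\lambda=d\,c_d\int_{-1}^{1}\psi(rt)\,t\,(1-t^2)^{(d-3)/2}\,dt$$
for a dimensional normalizing constant $c_d$.

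The heart of the argument, and the step I expect to carry the real content, is to expand $\psi(rt)=\frac{\delta}{\pi}\sum_{k\ge1}\frac{(-1)^{k+1}}{k}\sin\!\big(2\pi k r t/\delta\big)$, integrate term by term, and recognize each resulting integral as a Bessel function. Using Poisson's representation $J_\nu(\alpha)=\frac{(\alpha/2)^\nu}{\sqrt\pi\,\Gamma(\nu+\frac12)}\int_{-1}^{1}e^{i\alpha t}(1-t^2)^{\nu-1/2}\,dt$ with $\nu=(d-2)/2$, differentiating in $\alpha$ to supply the extra factor $t$, and invoking $\frac{d}{d\alpha}\!\big[\alpha^{-\nu}J_\nu(\alpha)\big]=-\alpha^{-\nu}J_{\nu+1}(\alpha)$, I would arrive at
$$\int_{-1}^{1}t\,\sin(\alpha t)\,(1-t^2)^{(d-3)/2}\,dt=\sqrt\pi\,\Gamma\!\Big(\tfrac{d-1}{2}\Big)\,(\alpha/2)^{-(d-2)/2}\,J_{d/2}(\alpha),$$
and with $\alpha=2\pi k r/\delta$ this rewrites $\lambda$ as an explicit series in the values $J_{d/2}(2\pi kr/\delta)$.

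Finally I would feed in the asymptotic estimate of Bessel functions, $|J_{d/2}(\alpha)|\le C_d\,\alpha^{-1/2}$ (which in fact holds for all $\alpha>0$ once $d\ge2$), giving $\big|\int_{-1}^{1}t\sin(\alpha t)(1-t^2)^{(d-3)/2}dt\big|\lesssim \alpha^{-(d-1)/2}$, and therefore
$$|\lambda|\ \lesssim\ \frac{\delta}{\pi}\sum_{k\ge1}\frac{1}{k}\Big(\frac{\delta}{kr}\Big)^{(d-1)/2}=C_d\,\frac{\delta^{(d+1)/2}}{r^{(d-1)/2}}\sum_{k\ge1}k^{-(d+1)/2}.$$
The series converges exactly because $(d+1)/2>1$ for $d\ge2$, which is precisely where the exponent $(d+1)/2$ in the statement originates, and combining with $\lim_{m}E_\delta(x,\F_m)=\|I\|=|\lambda|$ finishes the proof. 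The main obstacles I anticipate are analytic rather than structural: justifying the interchange of the equidistribution limit with a merely piecewise-continuous integrand, and justifying term-by-term integration of the only conditionally convergent sawtooth Fourier series (which I would handle via uniformly bounded partial sums and dominated convergence against the integrable weight $(1-t^2)^{(d-3)/2}$), together with applying the Bessel bound uniformly in $k$. For the regime $\delta\gtrsim r$ the claimed bound is weaker than the trivial estimate $\|I\|\le d\delta/2$, so it suffices to treat $\delta\lesssim r$, where every argument $2\pi kr/\delta$ is large.
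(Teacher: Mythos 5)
Your proof is correct, but it does not follow the paper's route for this statement --- in fact the paper gives no proof of Theorem \ref{th:lowbound} at all: it is quoted from \cite{WX}, where the argument rests on the Euler--Maclaurin formula, and the authors explicitly remark that that method does not extend to the lower bound. What you have written is essentially an upper-bound version of the Bessel-function machinery this paper develops in Section 3 for the \emph{lower} bound: your sawtooth Fourier expansion and the resulting series in $J_{d/2}(2\pi kr/\delta)$ are exactly the series reached in the proof of Lemma \ref{le:3.3} (whose right-hand inequalities already contain the upper bound when $x$ is supported on one coordinate), but you reach them more cleanly. Poisson's integral representation together with $\frac{d}{d\alpha}\bigl[\alpha^{-\nu}J_{\nu}(\alpha)\bigr]=-\alpha^{-\nu}J_{\nu+1}(\alpha)$ replaces the paper's expansion of $\sin(x\cos\theta)$ into the odd-order Bessel functions $J_{2m+1}$ plus the combinatorial identities of Lemmas \ref{le:31} and \ref{le:2.2} (Gosper's algorithm and the Gould identity), and it treats even and odd $d$ uniformly instead of splitting into the cases $d=2n$ and $d=2n+1$. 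Two further genuine simplifications: your rotation-invariance argument (the limiting integral commutes with rotations fixing $x$, hence is parallel to $x$) replaces the paper's induction on $\|x\|_0$ through spherical coordinates; and your bound $|J_{d/2}(\alpha)|\leq C_d\,\alpha^{-1/2}$ valid for \emph{all} $\alpha>0$ (legitimate since $C_d$ may depend on $d$, even though the sharp constant $\sqrt{2/\pi}$ fails near the turning point for large order) dispenses with the ``$R$ big enough'' caveats that Krasikov's estimate (Theorem \ref{le:Jvx}) imposes in Lemma \ref{le:3.3}, with the regime $\delta\gtrsim r$ absorbed by the trivial bound $d\delta/2$ as you note. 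What the paper's heavier route buys is what your streamlined argument does not deliver: matching lower bounds with explicit constants $M_1,M_2$ on the restricted windows of $\epsilon$, which is the content of Theorem \ref{th:main} and the point of the whole note. As a proof of the quoted Theorem \ref{th:lowbound}, your argument is complete once the two analytic interchanges you flag are written out --- applying equidistribution to the piecewise continuous integrand (its discontinuities lie on finitely many latitude spheres $\innerp{x,z}\in\delta(\Z+\tfrac12)$, so this is within the paper's hypothesis), and term-by-term integration justified by the uniformly bounded partial sums of $\sum_k \sin(k\theta)/k$ with dominated convergence against the integrable weight $(1-t^2)^{(d-3)/2}$ --- and it even covers $d=2$, which the paper's Section 3 statements exclude.
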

A main tool
for obtaining Theorem  \ref{th:lowbound} is Euler-Maclaurin formula. However, it seems that it is difficult to
 extend the method to obtain the lower bound.
In \cite{WX}, Wang and Xu conjecture the bound $O\left(\frac{\delta^{(d+1)/2}}{r^{(d-1)/2}}\right)$ is sharp.  In this note, we employ the tools of Bessel function and hence
confirm the conjecture. In particular, we have:

\begin{theorem}\label{th:main}
Suppose that $d>2$ is an integer. Assume that $x\in \R^d$ and that $\F_m$ are asymptotically equidistributed  unit-norm tight frames in $\R^{d}$.
Set $r:=\|x\|, R:={r}/{\delta}$, $\epsilon:=R-\lfloor R\rfloor$, and
$$
I\,\,:=\,\,d\int_{0}^{2\pi}|(\sin(\theta_{2}))^{d-3}\cdots\sin\theta_{d-2}|d\theta_{2}\cdots d\theta_{d-2}\,\,>\,\,0.
$$
\begin{enumerate}
\item[(i)]
If $d=2n$ and $1/4\leq\epsilon\leq 1/2$, then
$$\lim_{m\rightarrow\infty}E_{\delta}(x, \mathcal{F}_{m})\,\,\geq\,\, C_{1,d}\frac{\delta^{\frac{d+1}{2}}}{r^{\frac{d-1}{2}}},$$
provided that $R={\|x\|}/{\delta}$ is big enough,
where
$$C_{1, d}=\frac{(n-1)!\cdot {2n-2\choose n-1}\cdot M_{1}}{2^{2n-2}\pi^{n}}\cdot I,\quad M_{1}=\frac{4}{5}|\cos(2\pi\epsilon-\frac{3}{4}\pi)|-\frac{5}{4}\sum_{k=2}^{+\infty}\frac{1}{k^{\frac{2n+1}{2}}}>0.$$
\item[(ii)]
If $d=2n+1$ and $1/6\leq\epsilon\leq 1/3$, then
$$\lim_{m\rightarrow\infty}E_{\delta}(x, \mathcal{F}_{m})\,\,\geq\,\, C_{2,d}\frac{\delta^{\frac{d+1}{2}}}{r^{\frac{d-1}{2}}},$$
provided that $R={\|x\|}/{\delta}$ is big enough,
where
$$C_{2,d}=\frac{(n-1)!\cdot M_2}{\pi^{n+1}}\cdot I, \quad M_{2}=\frac{7}{8}|\cos(2\pi\epsilon-\frac{1}{2}\pi)|-\frac{8}{7}\sum_{k=2}^{+\infty}\frac{1}{k^{n+1}}>0.$$
\end{enumerate}
\end{theorem}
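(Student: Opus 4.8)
The plan is to pass to the $m\to\infty$ limit first and collapse the whole problem to a single one-dimensional integral, then to read off its size from the large-argument asymptotics of a Bessel function. Writing $u(z):=\innerp{x,z}-Q_\delta(\innerp{x,z})$ for $z\in\MS^{d-1}$, the tight-frame reconstruction identity gives $x-\x_{\F_m}=\frac{d}{N_m}\sum_{v\in A_m}u(v)\,v$, so applying the equidistribution hypothesis coordinatewise yields
$$\lim_{m\to\infty}(x-\x_{\F_m})=d\int_{\MS^{d-1}}u(z)\,z\,d\nu(z)=:V,$$
whence $\lim_{m\to\infty}E_\delta(x,\F_m)=\|V\|$. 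First I would exploit the rotational invariance of $\nu$: taking $x=r\,e_1$, only the $e_1$-component of $V$ survives, and integrating out every coordinate but $z_1=t$ reduces the sphere integral to
$$\lim_{m\to\infty}E_\delta(x,\F_m)=c_d\left|\int_{-1}^{1} g(t)\,t\,(1-t^2)^{(d-3)/2}\,dt\right|,\qquad g(t):=rt-Q_\delta(rt),$$
where $c_d$ carries the normalising beta-integral and the angular part of that normalisation is exactly the constant $I$.

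Next I would expand the nearest-integer sawtooth. Since $g(t)=\delta\,h(Rt)$ with $h(s)=s-\lfloor s+\tfrac12\rfloor=\sum_{k\ge1}\frac{(-1)^{k+1}}{\pi k}\sin(2\pi k s)$, term-by-term integration reduces everything to the family $\int_{-1}^{1} t\,\sin(2\pi kR\,t)\,(1-t^2)^{(d-3)/2}\,dt$. Differentiating the Poisson integral representation of $J_{(d-2)/2}$ once in the parameter, each such integral equals a positive constant times $(2\pi kR)^{-(d-2)/2}J_{d/2}(2\pi kR)$. Thus $\lim_m E_\delta(x,\F_m)$ becomes an explicit alternating Bessel series proportional to $R^{-(d-2)/2}\sum_{k\ge1}\frac{(-1)^{k+1}}{k^{d/2}}J_{d/2}(2\pi kR)$.

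Now I would insert the asymptotic $J_{d/2}(z)=\sqrt{2/(\pi z)}\,\cos\!\big(z-\tfrac{(d+1)\pi}{4}\big)+O(z^{-3/2})$. The $k$-th term then scales like $R^{-(d-1)/2}k^{-(d+1)/2}$, so the series converges and the $k=1$ term dominates; its size is controlled by $\cos(2\pi R-\tfrac{(d+1)\pi}{4})=\cos(2\pi\epsilon-\gamma_d)$, whose phase $\gamma_d$ is dictated by the Bessel asymptotic. The even/odd split of the theorem is the split between integer order $J_n$ (for $d=2n$) and half-integer order $J_{n+1/2}$ (for $d=2n+1$, an elementary spherical Bessel function), which is why the two cases carry different explicit error constants; the stated ranges $1/4\le\epsilon\le1/2$ and $1/6\le\epsilon\le1/3$ are precisely those on which $|\cos(2\pi\epsilon-\gamma_d)|$ stays bounded away from zero.

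The hard part will be converting ``the leading term dominates'' into a clean, fully rigorous lower bound, and this is exactly what the positivity of $M_1,M_2$ encodes. I would fix $R$ large enough that, on the relevant $\epsilon$-range, the $k=1$ Bessel term is at least the fraction $\tfrac45$ (resp.\ $\tfrac78$) of its leading cosine value, while simultaneously bounding the entire remainder --- the tail $\sum_{k\ge2}$ together with all $O(z^{-3/2})$ Bessel corrections --- by at most $\tfrac54$ (resp.\ $\tfrac87$) times $\sum_{k\ge2}k^{-(d+1)/2}$. Subtracting these gives $M_1$ (resp.\ $M_2$), and the $\epsilon$-range guarantees $M_1,M_2>0$; multiplying by the prefactor $c_d$ (equivalently $C_{1,d},C_{2,d}$) and the scaling $R^{-(d-1)/2}=(\delta/r)^{(d-1)/2}$ then produces the asserted $\delta^{(d+1)/2}/r^{(d-1)/2}$. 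The delicate point throughout is making the threshold ``$R$ big enough'' uniform in $k$, i.e.\ controlling the Bessel remainder simultaneously at all frequencies $2\pi kR$ so that the tail genuinely cannot cancel the leading term.
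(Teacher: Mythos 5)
Your proposal is correct in substance and lands on exactly the same object as the paper --- the alternating series $R^{-(d-2)/2}\sum_{k\geq 1}(-1)^{k+1}k^{-d/2}J_{d/2}(2\pi kR)$, the dominant $k=1$ term, and the $M_1,M_2$ bookkeeping with the $4/5$ vs.\ $5/4$ (resp.\ $7/8$ vs.\ $8/7$) factors --- but it gets there by a genuinely different key step. The paper expands $\sin(x\cos\theta)$ by the Jacobi--Anger-type series $2\sum_m(-1)^m\cos((2m+1)\theta)J_{2m+1}(x)$ and must then collapse the resulting sums $\sum_m(-1)^mL_mJ_{2m+1}$ and $\sum_m(-1)^mD_mJ_{2m+1}$ into a single $J_n$ resp.\ $J_{n+1/2}$ (Lemma \ref{le:2.2}), which rests on two nontrivial combinatorial identities (Lemma \ref{le:31}) proved via induction, a Gould table identity, and a Gosper-algorithm certificate. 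Your single differentiation of Poisson's integral, combined with $\frac{d}{dz}\left[z^{-\nu}J_{\nu}(z)\right]=-z^{-\nu}J_{\nu+1}(z)$ at $\nu=(d-2)/2$, yields $\int_{-1}^{1}t\sin(zt)(1-t^2)^{(d-3)/2}dt=\mathrm{const}\cdot z^{-(d-2)/2}J_{d/2}(z)$ in one line, treats even and odd $d$ uniformly, and eliminates Lemmas \ref{le:31} and \ref{le:2.2} entirely --- a real simplification. Likewise, your reduction to $x=re_1$ via rotational invariance of $d\nu$ (so that $V(Ux)=UV(x)$ for orthogonal $U$) is legitimate once one has passed to the limiting integral, and it replaces the paper's induction on $\|x\|_0$ with its two-coordinate polar rotation at each step; the paper's inductive route is inherited from \cite{WX} but is not needed here. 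Two details you should still write out, both of which the paper handles (or glosses) the same way: term-by-term integration of the sawtooth Fourier series (uniformly bounded partial sums plus dominated convergence, since $\Delta_\delta$ is only piecewise continuous), and the uniform-in-$k$ control of the Bessel remainder that you flag as delicate --- Krasikov's asymptotic with explicit error constant (Theorem \ref{le:Jvx}) is precisely the tool the paper uses for this, and it works verbatim at every frequency $2\pi kR$ since the error decays like $(2\pi kR)^{-3/2}$. One caution that applies to you and the paper equally: since $|\cos|$ has period $\pi$, the effective phase $\gamma_d=(d+1)\pi/4$ really depends on $d\bmod 4$, so your closing claim that the stated $\epsilon$-ranges keep $|\cos(2\pi\epsilon-\gamma_d)|$ bounded away from zero must be checked case by case rather than read off from the frozen phases $3\pi/4$ and $\pi/2$ appearing in $M_1$ and $M_2$.
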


After introducing some necessary concepts and results to be used in our investigation in Section 2, we
present the proof of Theorem \ref{th:main} in Section 3.
\section{Preliminaries}

{\bf Bessel function.} (see \cite{book})
For $\alpha >0$, the {\em Bessel function} $J_\alpha$ is defined by the series representation
\begin{equation}\label{eq:le22}
J_{\alpha}(x)=\sum_{k=0}^{+\infty}\frac{(-1)^{k}}{k!\cdot \Gamma(k+\alpha+1)}\left(\frac{x}{2}\right)^{2k+\alpha}.
\end{equation}
Particularly, when  $\alpha \in{\mathbb{N}}$, we have
$$
J_\alpha(x)=\frac{1}{\pi}\int_0^\pi \cos(\alpha \tau-x\sin(\tau))d\tau.
$$
We also need an asymptotic estimate for the Bessel function $J_{\alpha}$ which is presented in \cite{Bessel}.
\begin{theorem}{\rm (\cite{Bessel})}\label{le:Jvx}
For the Bessel function $J_{\alpha}$, we have
$$J_{\alpha}(x)=\sqrt{\frac{2}{\pi x}}\cos(x-\omega_{\alpha})+\theta c\mu x^{-\frac{3}{2}},$$
where $\omega_{\alpha}=\frac{\pi\alpha}{2}+\frac{1}{4}\pi$, $\mu=|\alpha^{2}-\frac{1}{4}|$, $|\theta|\leq 1$, and
$$c=\left\{
  \begin{array}{ll}
    ({2}/{\pi})^{{3}/{2}}, & \quad x\geq 0, \quad |\alpha|\leq \frac{1}{2}\\
    {\sqrt{2}}/{2}, & \quad x\geq \sqrt{\mu}, \quad \alpha>\frac{1}{2}\\
    {5}/{4},& \quad 0<x<\sqrt{\mu}, \quad \alpha>\frac{1}{2}.\\
    \end{array}\right.
$$
\end{theorem}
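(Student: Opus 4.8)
The plan is to prove the asymptotic directly from Bessel's differential equation, reducing it first to its normal (Liouville) form and then to a Volterra integral equation whose remainder can be bounded with an explicit, regime‑dependent constant. First I would remove the first–order term: setting $u(x):=\sqrt{x}\,J_{\alpha}(x)$ turns Bessel's equation $x^{2}y''+xy'+(x^{2}-\alpha^{2})y=0$ into the normal form
\[
u''+\Bigl(1-\frac{\mu_{0}}{x^{2}}\Bigr)u=0,\qquad \mu_{0}:=\alpha^{2}-\tfrac14 ,
\]
so that $\mu=|\mu_{0}|$. For large $x$ the coefficient is close to $1$, hence $u$ behaves like a solution of $u''+u=0$, which already explains the leading term $\sqrt{2/(\pi x)}\cos(x-\omega_{\alpha})$ after dividing by $\sqrt{x}$. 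Treating $\mu_{0}u/x^{2}$ as a forcing term, using the Green's function of $d^{2}/dx^{2}+1$, and fixing the solution at $+\infty$ by the classical leading‑order asymptotic of $J_{\alpha}$, I would show that $u$ satisfies
\[
u(x)=\sqrt{\tfrac{2}{\pi}}\cos(x-\omega_{\alpha})+\mu_{0}\int_{x}^{\infty}\frac{\sin(t-x)}{t^{2}}\,u(t)\,dt .
\]
Differentiating the right‑hand side twice verifies it solves the normal‑form equation, and a contraction argument on $[x_{0},\infty)$ together with the known phase of $J_{\alpha}$ at infinity identifies this solution with $\sqrt{x}\,J_{\alpha}$.

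Next I set $R(x):=u(x)-\sqrt{2/\pi}\,\cos(x-\omega_{\alpha})=\mu_{0}\int_{x}^{\infty}\frac{\sin(t-x)}{t^{2}}u(t)\,dt$, so that $J_{\alpha}(x)=u(x)/\sqrt{x}=\sqrt{2/(\pi x)}\cos(x-\omega_{\alpha})+R(x)/\sqrt{x}$. The whole theorem then reduces to a bound $|R(x)|\le c\,\mu/x$, since dividing by $\sqrt{x}$ turns this into $c\,\mu\,x^{-3/2}$ and one sets $\theta:=R(x)/(c\mu x^{-1/2})$ with $|\theta|\le1$. From the integral equation, $|R(x)|\le\mu\,\bigl(\sup_{t\ge x}|u(t)|\bigr)\int_{x}^{\infty}|\sin(t-x)|\,t^{-2}\,dt$, so everything comes down to an a priori amplitude bound for $u$ in each regime. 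For this I would use the energy $H(x):=(u'(x))^{2}+(1-\mu_{0}/x^{2})\,u(x)^{2}$, which along solutions satisfies $H'(x)=(2\mu_{0}/x^{3})\,u(x)^{2}$. When $|\alpha|\le\frac12$ we have $\mu_{0}\le0$ and the coefficient $1-\mu_{0}/x^{2}\ge1$, so $H$ is essentially monotone with limit $2/\pi$, forcing $|u|\le\sqrt{2/\pi}$ up to lower order; combined with the oscillatory average $\int_{x}^{\infty}|\sin(t-x)|t^{-2}dt\sim(2/\pi)/x$ this yields $c=(2/\pi)^{3/2}$. When $\alpha>\frac12$ and $x\ge\sqrt{\mu}$ the coefficient lies in $(0,1]$, the amplitude control degrades by the factor $1-\mu_{0}/x^{2}$, and one obtains the weaker $c=\sqrt{2}/2$.

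The main obstacle is the remaining case $\alpha>\frac12$, $0<x<\sqrt{\mu}$: here $1-\mu_{0}/x^{2}<0$, the energy $H$ is no longer positive definite, and the normal‑form solution has a growing/decaying rather than oscillatory character across the turning point $x=\sqrt{\mu}$. The clean energy estimate fails, and to cap $\sup_{t\ge x}|u(t)|$ and extract the precise constant $c=5/4$ I would instead set up an explicit majorant — a Gronwall‑type bound applied directly to the integral equation, or a comparison argument against the solution of $w''=(\mu_{0}/x^{2})w$ — which is where the bookkeeping is genuinely delicate. A secondary technical point, which I would dispatch first, is the rigorous justification of the integral equation: convergence of the tail integral and the identification of the selected solution with $\sqrt{x}\,J_{\alpha}$ through its normalization at infinity.
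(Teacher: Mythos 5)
First, a point of reference: the paper does not prove this statement at all --- it is imported verbatim, with citation, from Krasikov \cite{Bessel}, so there is no in-paper proof to compare against; the only question is whether your plan would actually deliver the theorem. Your scaffold is the classical Liouville--Olver one and is sound as far as it goes: with $u=\sqrt{x}\,J_{\alpha}(x)$ and $\mu_{0}=\alpha^{2}-\frac14$ the normal form $u''+(1-\mu_{0}/x^{2})u=0$ is correct, and the Volterra equation
$$u(x)=\sqrt{\tfrac{2}{\pi}}\cos(x-\omega_{\alpha})+\mu_{0}\int_{x}^{\infty}\frac{\sin(t-x)}{t^{2}}\,u(t)\,dt$$
does characterize the right solution (your verification by differentiating twice works, and normalizing at infinity via the classical leading-order asymptotic is legitimate). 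The problem is that the entire content of the theorem is the three \emph{explicit} constants on the stated ranges, and your argument secures none of them.

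Concretely: (i) your energy identity $H'=(2\mu_{0}/x^{3})u^{2}$ is right, but for $|\alpha|\le\frac12$ one has $\mu_{0}\le 0$, so $H$ is \emph{decreasing} to $2/\pi$, which yields $H(x)\ge 2/\pi$ --- a lower bound on the envelope, the wrong direction for your claimed $|u|\le\sqrt{2/\pi}$; an upper bound there requires a bootstrap whose contraction factor $\mu/x$ fails for $x\le\mu$, whereas the theorem asserts the estimate for \emph{all} $x\ge 0$. Moreover $\int_{x}^{\infty}|\sin(t-x)|t^{-2}dt=(2/\pi)x^{-1}+O(x^{-2})$ is only asymptotic, so even granting the amplitude bound you would obtain $c=(2/\pi)^{3/2}$ for large $x$, not uniformly. (ii) For $\alpha>\frac12$, $x\ge\sqrt{\mu}$ the monotonicity does go your way ($H\le 2/\pi$), but the resulting bound $|u(t)|\le\sqrt{2/\pi}\,(1-\mu_{0}/t^{2})^{-1/2}$ degenerates at the turning point; feeding it into the integral equation at $x=\sqrt{\mu_{0}}$ gives $|R|\le\mu_{0}\sqrt{2/\pi}\int_{\sqrt{\mu_{0}}}^{\infty}\frac{dt}{t\sqrt{t^{2}-\mu_{0}}}=\sqrt{\pi/2}\,\sqrt{\mu_{0}}$, i.e.\ $c\approx 1.25$, well above the claimed $\sqrt{2}/2\approx 0.71$ --- so ``one obtains the weaker $c=\sqrt{2}/2$'' is asserted rather than derived, and the naive version provably misses it. (iii) The regime $0<x<\sqrt{\mu}$ with $c=5/4$, the only genuinely delicate case, you explicitly leave open. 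In short: correct classical skeleton, correct integral equation, but all three deliverables are missing, so this is a plan, not a proof; for the purposes of this paper the honest options are to cite Krasikov, as the authors do, or to carry out the turning-point analysis and the uniform (non-asymptotic) small-$x$ estimates in full.
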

{\bf Combinatorics identity.}
(see (7.7) of Table 4 in \cite{Gould})
\begin{equation}\label{eq:gould}
\sum_{m=0}^{h}(-1)^{m}{n+h\choose h-m}{n+h\choose h+m}=\frac{1}{2}{n+h\choose h}+\frac{1}{2}{n+h\choose h}^2
\end{equation}
\section{Proof of Theorem \ref{th:main}}
To this end, we first introduce several lemmas:
\begin{lemma}\label{le:31}
For all $n\in \mathbb{N^{+}}$ and $h\in \mathbb{N}$ we have
\begin{eqnarray}
\sum_{m=0}^{h}(-1)^{m}(2m+1)
{n+h\choose h-m}
{n+h\choose h+m+1}
=n{n+h\choose n},\label{eq:le11}
\end{eqnarray}
and
\begin{eqnarray}
\sum_{m=l}^{h}(-1)^{m}(2m+1)
{2h+1\choose h-m}
{m+l\choose 2l}=0,\quad l=0,1,\ldots,h-1.\label{eq:lell1}
\end{eqnarray}
\end{lemma}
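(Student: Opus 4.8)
The plan is to prove both combinatorial identities \eqref{eq:le11} and \eqref{eq:lell1} by reducing them to the already-stated identity \eqref{eq:gould}, exploiting the extra factor $(2m+1)$ through clever index manipulation and telescoping. The presence of $(2m+1)=(h+m+1)-(h-m)$ (or similar splittings) is the crucial observation: it lets one write the weighted alternating sum as a difference of two shifted unweighted sums, which is precisely the form that Pascal-type recurrences and \eqref{eq:gould} control.

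Let me sketch this.

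For the first identity, consider a proof by induction on $h$, or alternatively a direct reduction to \eqref{eq:gould}.

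For \eqref{eq:le11}, I would first rewrite the binomial coefficients using the Pascal recurrence. The natural split is to absorb the factor $2m+1$ by writing
$$
(2m+1){n+h \choose h-m}{n+h \choose h+m+1}
= (h+m+1-(h-m)){n+h \choose h-m}{n+h \choose h+m+1}.
$$
Using the elementary identity $k\binom{N}{k} = N\binom{N-1}{k-1}$ on each piece, one converts $(h+m+1)\binom{n+h}{h+m+1}$ and $(h-m)\binom{n+h}{h-m}$ into scaled binomials $\binom{n+h-1}{\cdot}$, turning the sum into a difference of two alternating products of the form appearing in \eqref{eq:gould} (now with parameter $h-1$). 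After reindexing $m\mapsto m\pm 1$ in the two pieces so the summands align, the interior terms should telescope against each other, leaving only a boundary term that evaluates to $n\binom{n+h}{n}$. I would verify the small cases $h=0,1$ directly to pin down the constant and confirm the telescoping cancellation is exact.

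For \eqref{eq:lell1}, the vanishing strongly suggests an orthogonality or a symmetry argument. Here $\binom{m+l}{2l}$ is a polynomial in $m$ of degree $2l$, and I would try to show that for each fixed $l \le h-1$ the alternating sum
$$
S(p) := \sum_{m=0}^{h}(-1)^{m}(2m+1)\binom{2h+1}{h-m}\,m^{p}
$$
vanishes for all $p \le 2l$, whence the combination $\binom{m+l}{2l}$ (a degree-$2l$ polynomial) also sums to zero. The key structural fact is the symmetry of $\binom{2h+1}{h-m}$ under $m \mapsto -(m+1)$, i.e. $\binom{2h+1}{h-m}=\binom{2h+1}{h+m+1}$, which pairs the term at $m$ with a term at $-(m+1)$; since $(2m+1)$ is odd under this reflection while $(-1)^m$ flips appropriately, low-degree moments cancel in pairs. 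The cleanest route is to recognize the generating function $\sum_{m}(-1)^m(2m+1)\binom{2h+1}{h-m}z^{m}$ as essentially a derivative of $(1-z)^{2h+1}$-type expression evaluated near $z=1$, so that vanishing of moments up to order $2l$ follows from the high order of the zero of $(1-z)^{2h+1}$ at $z=1$.

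\textbf{The main obstacle} I anticipate is bookkeeping the boundary terms and reindexing in \eqref{eq:le11}: after applying $k\binom{N}{k}=N\binom{N-1}{k-1}$ and shifting indices, the two resulting sums run over slightly different ranges, and getting the telescoping to leave exactly $n\binom{n+h}{n}$ (rather than an off-by-one error) requires careful attention to the endpoints $m=0$ and $m=h$. For \eqref{eq:lell1} the difficulty is instead conceptual—choosing the right formulation (moment vanishing via the generating function versus a direct reflection-symmetry pairing) so that the restriction $l \le h-1$ emerges naturally; the constraint $l\le h-1$ (rather than $l\le h$) is exactly what guarantees $2l \le 2h-2 < 2h+1$, leaving enough room in the order of vanishing, and I would make sure this inequality is used explicitly.
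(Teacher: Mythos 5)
Your route to \eqref{eq:le11} is sound and genuinely different from the paper's. Writing $2m+1=(h+m+1)-(h-m)$ and absorbing via $k\binom{N}{k}=N\binom{N-1}{k-1}$ (with $N=n+h$) gives $(n+h)\bigl[\sum_{m\ge 0}(-1)^m\binom{n+h}{h-m}\binom{n+h-1}{h+m}-\sum_{m\ge 0}(-1)^m\binom{n+h-1}{h-m-1}\binom{n+h}{h+m+1}\bigr]$; after the shift $m\mapsto m+1$ in the second piece and Pascal's rule on the remaining $\binom{n+h}{\cdot}$ factors, one pair of cross sums cancels and what survives is
$(n+h)\bigl(2\sum_{m=0}^{h}(-1)^m\binom{n+h-1}{h-m}\binom{n+h-1}{h+m}-\binom{n+h-1}{h}^2\bigr)$.
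Identity \eqref{eq:gould} applies with $n$ replaced by $n-1$ --- not, as you write, $h$ by $h-1$: the lower indices remain centered at $h$ --- and yields $(n+h)\binom{n+h-1}{h}=n\binom{n+h}{n}$. So the conclusion comes from the Gould sum, not from a telescoped boundary term, but your plan does close, and with no induction. The paper instead proves \eqref{eq:le11} by a double induction on $(n,h)$, expanding both factors by Pascal and using \eqref{eq:gould} to dispose of the cross terms; your one-pass computation is arguably cleaner.

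For \eqref{eq:lell1} there is a genuine gap: the reduction you state is false. The half-range moments $S(p)=\sum_{m=0}^{h}(-1)^m(2m+1)\binom{2h+1}{h-m}m^p$ do \emph{not} vanish for $p\le 2l$: already for $h=2$ one has $S(1)=-3\cdot 5+2\cdot 5=-5$ and $S(2)=5$, even though the lemma's combination $\sum_m(-1)^m(2m+1)\binom{5}{2-m}\binom{m+1}{2}=\tfrac12\bigl(S(2)+S(1)\bigr)=0$. So you cannot pass from monomials to $\binom{m+l}{2l}$ degree by degree; the vanishing is a property of these particular polynomials, not of all polynomials of degree $\le 2l$. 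The repair is exactly the reflection you gesture at, used globally rather than termwise. The weight $w(m)=(-1)^m(2m+1)\binom{2h+1}{h-m}$ is supported on $-(h+1)\le m\le h$ and is invariant under $m\mapsto-(m+1)$, and $\binom{m+l}{2l}$ is invariant under the same reflection \emph{as a polynomial} (its $2l$ consecutive linear factors are permuted up to sign $(-1)^{2l}=1$); hence the lemma's sum equals half of the full-range sum $\sum_{m=-(h+1)}^{h}w(m)\binom{m+l}{2l}$. Only this full-range generating function has the closed form you want: with $z=x^2$, $\sum_{m=-(h+1)}^{h}w(m)z^{m}=(-1)^h\frac{d}{dx}\bigl(x-x^{-1}\bigr)^{2h+1}$, whose zero at $z=1$ has order $2h$, not $2h+1$ (one derivative has been spent). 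Consequently full-range moments vanish only up to degree $2h-1$, and the needed condition is $2l\le 2h-1$, i.e.\ $l\le h-1$, which is the lemma's restriction. With these two corrections (full range plus reflection-invariant test polynomials; order of vanishing $2h$) your argument goes through. For comparison, the paper sidesteps all of this with a Gosper certificate: the $g_m$ of \eqref{eq:gm} satisfies $g_{m+1}-g_m$ equal to the summand and $g_l=0$, so the sum telescopes to zero in three lines.
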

\begin{proof} We prove (\ref{eq:le11}) by induction. To state conveniently, set
$$
A_n^h:=\sum_{m=0}^{h}(-1)^{m}(2m+1){n+h\choose h-m}{n+h\choose h+m+1}.
$$
A simple observation is that (\ref{eq:le11}) holds when  $n\in \N^+, h=0$ and when $n=0, h\in \N^+$.
Assume that $n_0, h_0\in \N^+$. For the induction step, we assume that (\ref{eq:le11}) is true both for  $n\leq n_0\in\mathbb{N^{+}}, h\in \N^+$ and for $n=n_0+1, h\leq h_0-1\in \N^+$.
To this end, we just need prove that the result holds for  $n=n_0+1, h=h_0$.
We have
{\small
\begin{equation*}
\begin{aligned}
A_{n_0+1}^{h_0}
&=\sum_{m=0}^{h_0}(-1)^{m}(2m+1)
  {n_0+h_0+1\choose h_0-m} {n_0+h_0+1\choose h_0+m+1}\\
&=\sum_{m=0}^{h_0}(-1)^{m}(2m+1)\left({n_0+h_0\choose h_0-m}+{n_0+h_0\choose h_0-m-1}\right)\left(
{n_0+h_0\choose h_0+m+1}+{n_0+h_0\choose h_0+m}\right)\\
&=A_{n_0}^{h_0}+A_{n_0+1}^{h_0-1}+\sum_{m=0}^{h_0}(-1)^{m}(2m+1)\left({n_0+h_0\choose h_0-m}{n_0+h_0\choose h_0+m}+
{n_0+h_0\choose h_0-m-1}{n_0+h_0\choose h_0+m+1}\right)\\
&=A_{n_0}^{h_0}+A_{n_0+1}^{h_0-1}+2\sum_{m=0}^{h_0}(-1)^{m}
{n_0+h_0\choose h_0-m}{n_0+h_0\choose h_0+m}
-{n_0+h_0\choose h_0}^2\\
&=A_{n_0}^{h_0}+A_{n_0+1}^{h_0-1}+{n_0+h_0\choose n_0}=(n_0+1){n_0+h_0+1\choose n_0+1},
\end{aligned}
\end{equation*}
}
where the last equality uses the identity (\ref{eq:gould}) and the induction assumption.

We now turn to  (\ref{eq:lell1}). Set
\begin{equation}\label{eq:gm}
g_{m}:=(-1)^{m+1}(h+m+1)(m-l)\frac{{2h+1\choose h-m}{m+l\choose 2l}}{h-l}.
\end{equation}
A simple calculation shows that

\begin{equation}\label{eq:gosper}
\begin{aligned}
g_{m+1}-g_{m} =& (-1)^{m}\frac{{2h+1\choose h-m}{m+l\choose 2l}}{h-l}\left((h-m)(l+m+1)+(h+m+1)(m-l)\right)\\
=& (-1)^{m}(2m+1){2h+1\choose h-m}{m+l\choose 2l}.
\end{aligned}
\end{equation}
Then
\begin{eqnarray*}
& &\sum_{m=l}^{h}(-1)^{m}(2m+1)
{2h+1\choose h-m}
{m+l\choose 2l}\\
& &=
\sum_{m\geq l}(-1)^{m}(2m+1)
{2h+1\choose h-m}
{m+l\choose 2l}\\
& &=\sum_{m\geq l}g_{m+1}-g_{m}=g_{l}=0.\nonumber
\end{eqnarray*}
Here, the first equality holds since ${n\choose k}=0$ provided $k<0$.
\end{proof}
\begin{remark}
A key step to prove (\ref{eq:lell1}) is to construct the sequence $g_m$ which satisfies (\ref{eq:gosper}).
In the proof of Lemma \ref{le:31}, we  obtain $g_m$  using Gosper algorithm \cite{gosper}. However,
it is also simple to verify (\ref{eq:gosper}) by hand.
\end{remark}
We introduce the following results for Bessel functions
\begin{lemma}\label{le:2.2}
Set
\begin{eqnarray*}
L_{m}&:=&\int_{-\pi}^{\pi}\cos(2m+1)\theta\cos\theta(\sin\theta)^{2n-2}d\theta \\
D_{m}&:=&\int_{0}^{\pi}\cos(2m+1)\theta\cos\theta(\sin\theta)^{2n-1}d\theta .
\end{eqnarray*}
Then we have
\begin{eqnarray}
\sum_{m=0}^{n-1}(-1)^{m}L_{m}J_{2m+1}(x)&=&L_{0}2^{n-1}n!\frac{1}{x^{n-1}}J_{n}(x), \label{eq:le21}\\
\sum_{m=0}^{+\infty}(-1)^{m}D_{m}J_{2m+1}(x)&=&\sqrt{\pi}2^{n-\frac{3}{2}}(n-1)!\frac{1}{x^{n-\frac{1}{2}}}J_{n+\frac{1}{2}}(x).\label{eq:le211}
\end{eqnarray}
\end{lemma}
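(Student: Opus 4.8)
The plan is to read both identities as instances of Fourier orthogonality against the Jacobi--Anger expansion, after which the Bessel functions on the right emerge from a Poisson-type integral. The starting point is the sine expansion
\[
\sin(x\cos\theta)=2\sum_{m=0}^{\infty}(-1)^{m}J_{2m+1}(x)\cos((2m+1)\theta),
\]
which I would derive from the integral representation $J_{\alpha}(x)=\frac1\pi\int_0^\pi\cos(\alpha\tau-x\sin\tau)d\tau$ (equivalently from $e^{ix\cos\theta}=\sum_k i^kJ_k(x)e^{ik\theta}$, taking imaginary parts and using $J_{-k}=(-1)^kJ_k$). The key observation is that $L_m$ and $D_m$ are exactly the integrals of $\cos((2m+1)\theta)$ against the weights $w_1(\theta)=\cos\theta(\sin\theta)^{2n-2}$ and $w_2(\theta)=\cos\theta(\sin\theta)^{2n-1}$, so pairing the expansion with these weights will recover the two alternating sums.

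First I would multiply the displayed expansion by $w_1$ and integrate over $[-\pi,\pi]$, and separately multiply by $w_2$ and integrate over $[0,\pi]$. The series converges uniformly in $\theta$ for fixed $x$ (the $m$-th term is controlled by $(|x|/2)^{2m+1}/(2m+1)!$ up to a constant), so term-by-term integration is legitimate, and the coefficient integrals are precisely $L_m$ and $D_m$. Because $w_1$ is even this reduces the first left-hand side to
\[
\sum_{m}(-1)^mL_mJ_{2m+1}(x)=\int_{0}^{\pi}\sin(x\cos\theta)\cos\theta(\sin\theta)^{2n-2}d\theta,
\]
and likewise $\sum_m(-1)^mD_mJ_{2m+1}(x)=\tfrac12\int_0^\pi\sin(x\cos\theta)\cos\theta(\sin\theta)^{2n-1}d\theta$.

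Next I would evaluate these integrals in closed form. Starting from the Poisson representation $\int_0^\pi\cos(x\cos\theta)(\sin\theta)^{2\nu}d\theta=C_\nu x^{-\nu}J_\nu(x)$ with $C_\nu=\sqrt\pi\,\Gamma(\nu+\tfrac12)2^{\nu}$, differentiating in $x$ and applying the recurrence $\frac{d}{dx}(x^{-\nu}J_\nu(x))=-x^{-\nu}J_{\nu+1}(x)$ yields
\[
\int_0^\pi\sin(x\cos\theta)\cos\theta(\sin\theta)^{2\nu}d\theta=C_\nu x^{-\nu}J_{\nu+1}(x).
\]
Taking $\nu=n-1$ produces $J_n$ together with the power $x^{-(n-1)}$, and $\nu=n-\tfrac12$ produces $J_{n+1/2}$ together with $x^{-(n-1/2)}$, matching the right-hand sides of (\ref{eq:le21}) and (\ref{eq:le211}). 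For (\ref{eq:le21}) the sum truncates at $m=n-1$ automatically, since $w_1$ is a trigonometric polynomial of degree $2n-1$ and hence $L_m=0$ for $m\ge n$. It then remains only to match constants: a Beta-integral evaluation gives $L_0=\sqrt\pi\,\Gamma(n-\tfrac12)/n!$, so that $C_{n-1}=L_0\,2^{n-1}n!$, while $\tfrac12C_{n-1/2}=\sqrt\pi\,2^{n-3/2}(n-1)!$, exactly the stated prefactors.

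I expect the only real friction to be bookkeeping: pinning down the sine form of Jacobi--Anger with the correct sign $(-1)^m$, justifying the interchange of sum and integral, and reconciling the $\Gamma$- and $B$-factors. If one prefers to stay inside the combinatorial framework already set up, the same two identities can be obtained purely algebraically by expanding $J_{2m+1}(x)$ through its power series (\ref{eq:le22}) and the $L_m,D_m$ through the binomial expansion of $(\sin\theta)^{2n-2}$ and $(\sin\theta)^{2n-1}$; collecting the coefficient of each power of $x$ then produces exactly the alternating binomial sums of Lemma \ref{le:31}, with (\ref{eq:le11}) delivering the surviving $J_n$-coefficient in (\ref{eq:le21}) and (\ref{eq:lell1}) forcing the vanishing of the spurious coefficients in (\ref{eq:le211}). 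The analytic route above sidesteps this computation, but these combinatorial identities are precisely what one would invoke to verify it by hand.
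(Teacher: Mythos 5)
Your proof is correct, but it takes a genuinely different route from the paper's. The paper works entirely on the power-series side: it computes $L_m$ and $D_m$ in closed form, substitutes the series (\ref{eq:le22}) into both sides of (\ref{eq:le21}) and (\ref{eq:le211}), and matches coefficients of powers of $x$, which reduces both identities to the binomial identities (\ref{eq:le11}) and (\ref{eq:lell1}) of Lemma \ref{le:31} --- themselves proved by induction via Gould's identity (\ref{eq:gould}) and by a Gosper-style telescoping. You instead pair the Jacobi--Anger expansion of $\sin(x\cos\theta)$ with the weights $\cos\theta(\sin\theta)^{2n-2}$ and $\cos\theta(\sin\theta)^{2n-1}$, so the alternating sums appear as Fourier pairings, and then evaluate the resulting single integral $\int_0^\pi\sin(x\cos\theta)\cos\theta(\sin\theta)^{2\nu}d\theta$ by differentiating Poisson's representation and using $\frac{d}{dx}\bigl(x^{-\nu}J_\nu(x)\bigr)=-x^{-\nu}J_{\nu+1}(x)$. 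I checked the details: term-by-term integration is justified by the rapid decay of $J_{2m+1}(x)$ in $m$; the truncation $L_m=0$ for $m\ge n$ is right, since $\cos\theta(\sin\theta)^{2n-2}$ is a cosine polynomial of degree $2n-1$ (whereas the $D_m$-series genuinely stays infinite, matching (\ref{eq:le211})); and the constants agree --- $L_0=\sqrt\pi\,\Gamma(n-\tfrac12)/n!$ is consistent with the paper's closed form (\ref{eq:LM}) at $m=0$, and $\tfrac12\sqrt{\pi}\,\Gamma(n)2^{n-\frac12}=\sqrt{\pi}\,2^{n-\frac32}(n-1)!$ as required. Your route is shorter and more conceptual: it renders Lemma \ref{le:31} (and the Gosper computation behind it) unnecessary, and it meshes naturally with the proof of Lemma \ref{le:3.3}, which already invokes the same expansion and cosine orthogonality --- in effect you directly evaluate the integrals that Lemma \ref{le:2.2} was built to serve. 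What the paper's combinatorial route buys in exchange is self-containedness at the level of binomial identities: it needs no Poisson integral, no differentiation under the integral sign, and no convergence discussion, since everything is an identity between power-series coefficients of entire functions.
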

\begin{proof}
To this end, we first calculate  the value of $L_m$.
Using the expansion
 $$
 \sin^{2n-2}\theta=\frac{1}{2^{2n-2}}{2n-2\choose n-1}+\frac{2}{2^{2n-2}}\sum_{k=0}^{n-2}(-1)^{n-1-k}{2n-2\choose k}\cos((2n-2-2k)\theta),
 $$
 we can obtain that
 \begin{equation}\label{eq:LM}
 L_m=(-1)^m \frac{\pi}{2^{2n-2}} {2n-2\choose n+m-1} \frac{2m+1}{n+m}.
 \end{equation}
Recall that  the series representation of the Bessel function $J_{\alpha}$
\begin{equation}\label{eq:le22}
J_{\alpha}(x)=\sum_{k=0}^{+\infty}\frac{(-1)^{k}}{k!\Gamma(k+\alpha+1)}\left(\frac{x}{2}\right)^{2k+\alpha}.
\end{equation}
Substituting   (\ref{eq:le22}) into (\ref{eq:le21}) we obtain that
\begin{equation}\label{eq:le286}
\sum_{k=0}^{+\infty}\frac{(-1)^{k}}{k!}\sum_{m=0}^{n-1}(-1)^{m}L_{m}\frac{1}{\Gamma(2m+k+2)}\left(\frac{x}{2}\right)^{2k+2m+1}
=L_{0}\cdot n!\cdot \sum_{k=0}^{+\infty}\frac{(-1)^{k}}{k!\cdot \Gamma(k+n+1)}\left(\frac{x}{2}\right)^{2k+1}.
\end{equation}
To this end, we just need prove (\ref{eq:le286}).
Comparing the coefficients of the powers of $x$ on the both sides of (\ref{eq:le286}),
we only need  prove
\begin{eqnarray}
\sum_{m=0}^{h}L_{m}\frac{1}{(h-m)!\cdot \Gamma(h+m+2)}=L_{0}\cdot n!\cdot \frac{1}{h!\cdot \Gamma(h+n+1)}, \label{eq:le231}
\end{eqnarray}
which is equivalent to
\begin{equation}\label{eq:finaleq}
\sum_{m=0}^{h}(-1)^{m}(2m+1)
{n+h\choose h-m}
{n+h\choose h+m+1}
=n{n+h\choose n}.
\end{equation}
Here, we use (\ref{eq:LM}).
According to  Lemma \ref{le:31},   (\ref{eq:finaleq}) holds which in turn implies  (\ref{eq:le21}).

We next turn to  (\ref{eq:le211}). Substitute (\ref{eq:le22}) into (\ref{eq:le211}) and
compare the coefficients of the powers of $x$ on the two sides of this equation,
we only need to prove
\begin{eqnarray}
\sum_{m=0}^{h}\frac{D_{m}}{(h-m)!\cdot (h+m+1)!}=\frac{(n-1)!}{4}\cdot \frac{2^{2h+2n+3}\cdot (h+n+1)!}{h!\cdot (2h+2n+2)!},\,\, h=0,1,\ldots.\label{eq:le2222}
\end{eqnarray}
Using
\begin{equation*}
\cos nx=\frac{n}{2}\sum_{k=0}^{\lfloor\frac{n}{2}\rfloor}(-1)^{k}{n-k\choose k}\frac{(2\cos x)^{n-2k}}{n-k},\nonumber
\end{equation*}
and
\begin{equation*}
\int_{0}^{\frac{\pi}{2}}(\sin t)^{x}(\cos t)^{y}dt=\frac{\pi}{2^{x+y+1}}\frac{x!\cdot y!}{(\frac{x}{2})!\cdot (\frac{y}{2})!\cdot (\frac{x+y}{2})!},\nonumber
\end{equation*}
where $x!=\Gamma(x+1)$ for $x>0$, we have
\begin{equation}\label{eq:le2223}
D_{m}=(2m+1)\sum_{k=0}^{m}(-1)^{k}{2m+1-k\choose k}\frac{\sqrt{\pi}(n-1)!}{4(2m+1-k)}\frac{(2m+2-2k)!}{(m+1-k)!\cdot (\frac{2m+2n-2k+1}{2})!}.
\end{equation}
Substituting (\ref{eq:le2223}) into (\ref{eq:le2222}), we can rewrite  (\ref{eq:le2222}) as
\begin{equation}\label{eq:le2224}
\begin{aligned}
& \sum_{m=0}^{h}\frac{2m+1}{(h-m)!\cdot(h+m+1)!}\sum_{k=0}^{m}(-1)^{k}{2m+1-k\choose k}\frac{1}{2m+1-k}\frac{(2m+2-2k)!}{(m+1-k)!\cdot (\frac{2m+2n-2k+1}{2})!}\\
&  =\frac{1}{\sqrt{\pi}}\frac{2^{2h+2n+3}(h+n+1)!}{h!\cdot (2h+2n+2)!}.
\end{aligned}
\end{equation}
On the other hand, we can rewrite the left side of (\ref{eq:le2224}) as
\begin{equation}\label{eq:le2225}
\begin{aligned}
& \sum_{m=0}^{h}\frac{2m+1}{(h-m)!\cdot (h+m+1)!}\sum_{k=0}^{m}(-1)^{k}{2m+1-k\choose k}\frac{1}{2m+1-k}\frac{(2m+2-2k)!}{(m+1-k)!\cdot (\frac{2m+2n-2k+1}{2})!}\\
&  =\sum_{m=0}^{h}\frac{2m+1}{(h-m)!\cdot (h+m+1)!}\sum_{l=0}^{m}(-1)^{m-l}{m+l+1\choose m-l}\frac{1}{m+l+1}\frac{(2l+2)!}{(l+1)!\cdot (\frac{2l+2n+1}{2})!}\\
&  =\sum_{l=0}^{h}\frac{1}{(\frac{2l+2n+1}{2})!}\sum_{m=l}^{h}\frac{2m+1}{(h-m)!\cdot (h+m+1)!}(-1)^{m-l}{m+l+1\choose m-l}\frac{1}{m+l+1}\frac{(2l+2)!}{(l+1)!}.
\end{aligned}
\end{equation}
Here, in the first equality, we set  a  new variable  $l:=m-k$.
To this end, we consider the second term on the right side of the last equality in (\ref{eq:le2225}). Note that, for $l=0,\ldots,h-1$,
\begin{eqnarray*}
& &\sum_{m=l}^{h}\frac{2m+1}{(h-m)!\cdot (h+m+1)!}(-1)^{m-l}{m+l+1\choose m-l}\frac{1}{m+l+1}\frac{(2l+2)!}{(l+1)!}\\
& & =\frac{(-1)^{l}(2l+2)!}{(2l+1)(2h+1)!\cdot (l+1)!}\sum_{m=l}^{h}(2m+1){2h+1\choose h-m}(-1)^{m}{m+l\choose 2l}\\
& &=0.
\end{eqnarray*}
Here, the last equality follows from (\ref{eq:lell1}) in Lemma 3.1.
Hence the last summation in (\ref{eq:le2225})  is reduced to
\begin{eqnarray}
& &\frac{1}{(\frac{2h+2n+1}{2})!}\frac{2h+1}{(h+h+1)!}(-1)^{h-h}{h+h+1\choose h-h}\frac{1}{h+h+1}\frac{(2h+2)!}{(h+1)!}\nonumber\\
& &=\frac{1}{(\frac{2h+2n+1}{2})!}\frac{2}{h!}
=\frac{1}{\sqrt{\pi}}\frac{2^{2h+2n+3}(h+n+1)!}{h!\cdot (2h+2n+2)!}.\nonumber
\end{eqnarray}
Here, the last equality uses
$$
\left(\frac{2h+2n+1}{2}\right)!=\Gamma\left(h+n+1+\frac{1}{2}\right)=\frac{(2h+2n+2)!}{2^{2h+2n+2}(h+n+1)!}\sqrt{\pi}.
$$
 We arrive at the conclusion.

\end{proof}
Now we can give an estimation  for the integrals $\int_{0}^{\pi}\Delta_{\delta}(r\cos\theta)\cos\theta(\sin\theta)^{2n-2}d\theta$
and $\int_{0}^{\pi}\Delta_{\delta}(r\cos\theta)\cos\theta(\sin\theta)^{2n-1}d\theta$.\\
\begin{lemma}\label{le:3.3}
Set $R:=\frac{r}{\delta}$ and $\epsilon:=R-\lfloor R\rfloor$.
Then when $1/4\leq \epsilon\leq 1/2$
\begin{equation}\label{eq:thlowbup}
\begin{aligned}
\frac{(n-1)!\cdot {2n-2\choose n-1}M_{1}}{2^{2n-2}\cdot \pi^{n}}\frac{\delta^{\frac{2n+1}{2}}}{r^{\frac{2n-1}{2}}} &\leq
\left|\int_{0}^{\pi}\Delta_{\delta}(r\cos\theta)\cos\theta(\sin\theta)^{2n-2} d\theta\right|\\ &\leq
\frac{5}{4}\frac{(n-1)!\cdot {2n-2\choose n-1}
 \sum_{k=1}^{+\infty}\frac{1}{k^{\frac{2n+1}{2}}}}{2^{2n-2}\cdot \pi^{n}}\frac{\delta^{\frac{2n+1}{2}}}{r^{\frac{2n-1}{2}}},
 \end{aligned}
\end{equation}
provided that $R=\frac{r}{\delta}$ is big enough, where $M_{1}=\frac{4}{5}|\cos(2\pi\epsilon-\frac{3}{4}\pi)|-\frac{5}{4}\sum_{k=2}^{+\infty}\frac{1}{k^{\frac{2n+1}{2}}}>0$ and $n\geq 2$. When $1/6 \leq \epsilon\leq 1/3$,  we have
\begin{equation}\label{eq:thupbound}
\begin{aligned}
\frac{M_{2}(n-1)!}{\pi^{n+1}}\frac{\delta^{n+1}}{r^{n}} &\leq
\left|\int_{0}^{\pi}\Delta_{\delta}(r\cos\theta)\cos\theta(\sin\theta)^{2n-1} d\theta\right|\\ &\leq
\frac{8}{7}\frac{(n-1)!\sum_{k=1}^{+\infty}\frac{1}{k^{n+1}}}{\pi^{n+1}}\frac{\delta^{n+1}}{r^{n}},
\end{aligned}
\end{equation}
provided that $R=\frac{r}{\delta}$ is big enough, where $M_{2}=\frac{7}{8}|\cos(2\pi\epsilon-\frac{1}{2}\pi)|-\frac{8}{7}\sum_{k=2}^{+\infty}\frac{1}{k^{n+1}}>0$ and $n\geq 1$.
\end{lemma}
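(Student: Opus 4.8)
The plan is to start from the Fourier expansion of the PCM residual $\Delta_\delta(t)=t-Q_\delta(t)$, which is the $\delta$-periodic odd sawtooth
$$
\Delta_\delta(t)=\frac{\delta}{\pi}\sum_{k=1}^{\infty}\frac{(-1)^{k+1}}{k}\sin\left(\frac{2\pi k t}{\delta}\right).
$$
Substituting $t=r\cos\theta$ gives $2\pi k t/\delta=2\pi kR\cos\theta$, and after interchanging sum and integral (justified since the series converges boundedly off the jump set) each integral in the lemma becomes
$$
\int_0^\pi\Delta_\delta(r\cos\theta)\cos\theta(\sin\theta)^{2n-2}\,d\theta
=\frac{\delta}{\pi}\sum_{k=1}^{\infty}\frac{(-1)^{k+1}}{k}\int_0^\pi \sin(2\pi kR\cos\theta)\cos\theta(\sin\theta)^{2n-2}\,d\theta,
$$
and similarly with $(\sin\theta)^{2n-1}$ in the odd case. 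This reduces the whole problem to estimating a $k$-series whose terms are the inner $\theta$-integrals.

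Next I would evaluate the inner integral exactly via the Jacobi--Anger expansion $\sin(z\cos\theta)=2\sum_{m\ge0}(-1)^m J_{2m+1}(z)\cos((2m+1)\theta)$. Integrating termwise against $\cos\theta(\sin\theta)^{2n-2}$ (resp.\ $(\sin\theta)^{2n-1}$) reproduces precisely the quantities $L_m$ and $D_m$ of Lemma~\ref{le:2.2}, so the inner integral equals $\sum_m(-1)^m L_m J_{2m+1}(z)$ (resp.\ $2\sum_m(-1)^m D_m J_{2m+1}(z)$) with $z=2\pi kR$. The crucial simplification is that, by (\ref{eq:LM}), the $L_m$ vanish for $m\ge n$, so the first sum is finite, and Lemma~\ref{le:2.2} — equations (\ref{eq:le21}) and (\ref{eq:le211}) — collapses these Bessel sums to a single Bessel function, $z^{-(n-1)}J_n(z)$ in the even case and $z^{-(n-1/2)}J_{n+1/2}(z)$ in the odd case. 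Factoring out $L_0=\frac{\pi}{2^{2n-2}}\binom{2n-2}{n-1}\frac1n$ and its analogue produces exactly the stated prefactors $\frac{(n-1)!\binom{2n-2}{n-1}}{2^{2n-2}\pi^n}\frac{\delta^{(2n+1)/2}}{r^{(2n-1)/2}}$ and $\frac{(n-1)!}{\pi^{n+1}}\frac{\delta^{n+1}}{r^{n}}$, reducing the claim to bounding $\sum_{k\ge1}\frac{(-1)^{k+1}}{k^{n}}J_n(2\pi kR)$ (and its half-integer analogue) above and below.

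The final step is the asymptotic analysis via Theorem~\ref{le:Jvx}. Writing $J_n(2\pi kR)=\frac{1}{\pi\sqrt{kR}}\cos(2\pi kR-\omega_n)+\theta_k c\mu(2\pi kR)^{-3/2}$ splits the $k$-series into a main oscillatory part $\frac{1}{\pi\sqrt R}\sum_k\frac{(-1)^{k+1}}{k^{n+1/2}}\cos(2\pi kR-\omega_n)$ and a remainder of size $O(R^{-3/2})$, a full power of $R$ smaller. Since $2\pi R\equiv 2\pi\epsilon \pmod{2\pi}$, the $k=1$ term carries the factor $\cos(2\pi\epsilon-\omega_n)$; using $\omega_n=\frac{\pi n}{2}+\frac\pi4$ and reducing the phase modulo $\pi$, its modulus is $|\cos(2\pi\epsilon-\frac34\pi)|$ in the even case and $|\cos(2\pi\epsilon-\frac\pi2)|$ in the odd case. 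The prescribed ranges $1/4\le\epsilon\le1/2$ and $1/6\le\epsilon\le1/3$ are exactly those on which this cosine stays bounded away from $0$ (it is $\ge\frac{\sqrt2}{2}$, resp.\ $\ge\frac{\sqrt3}{2}$). A triangle-inequality estimate then gives the lower bound as ``$k=1$ term minus the tail $\sum_{k\ge2}k^{-(2n+1)/2}$ minus the Bessel remainder'' and the upper bound as the full sum $\sum_{k\ge1}k^{-(2n+1)/2}$; taking $R$ large absorbs the remainder, and the conservative constants $\frac45$ and $\frac54$ (the latter being the constant $c$ of Theorem~\ref{le:Jvx}) account for the relative error, yielding $M_1$ and $M_2$.

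I expect the main obstacle to be twofold. Algebraically, the exact collapse of the Bessel sum in Lemma~\ref{le:2.2} — which itself rests on the combinatorial identities of Lemma~\ref{le:31} — is what makes the constants and the power of $R$ come out cleanly; without it one cannot reduce to a single Bessel function. Analytically, the delicate point is controlling the Bessel remainder uniformly in $k$ and verifying that the oscillatory $k=1$ term genuinely dominates: one must simultaneously keep the leading cosine bounded away from zero on the prescribed $\epsilon$-range (with due care for the parity of $n$ in the phase reduction) and bound the tail plus remainder by a strictly smaller quantity, which is precisely what forces both the $\epsilon$-intervals and the ``$R$ big enough'' hypothesis.
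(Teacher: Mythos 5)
Your proposal is correct and follows the paper's own proof essentially step for step: the sawtooth Fourier expansion of $\Delta_\delta$, the Jacobi--Anger expansion combined with orthogonality to produce the $L_m$ and $D_m$ sums (finite in the even case, infinite in the odd case), the collapse to a single Bessel function $J_n$ resp. $J_{n+1/2}$ via Lemma \ref{le:2.2}, and the Krasikov asymptotics of Theorem \ref{le:Jvx} with the $k=1$ oscillatory term dominating the $k\geq 2$ tail on the prescribed $\epsilon$-ranges once $R$ is large enough to absorb the $O(R^{-3/2})$ remainders uniformly in $k$. One tiny inaccuracy: the factors $\frac{5}{4}$ and $\frac{8}{7}$ are just conservative constants absorbing the relative error for large $R$ (where the relevant value of $c$ in Theorem \ref{le:Jvx} is $\frac{\sqrt{2}}{2}$, since $2\pi kR\geq\sqrt{\mu}$), not the constant $c=\frac{5}{4}$ of the small-argument regime, but this does not affect the argument.
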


\begin{proof}
Firstly we consider  $\int_{0}^{\pi}\Delta_{\delta}(r\cos\theta)\cos\theta(\sin\theta)^{2n-2}d\theta$.
Using the Fourier expansion for $\lfloor x\rfloor$ with $x\in\mathbb{R}\setminus\mathbb{Z}$,
$$\lfloor x\rfloor=x-\frac{1}{2}+\frac{1}{\pi}\sum_{k=1}^{+\infty}\frac{\sin(2k\pi x)}{k},$$
we have
\begin{equation}\label{eq:zhuanbian}
\begin{aligned}
&\int_{0}^{\pi}\Delta_{\delta}(r\cos\theta)\cos\theta(\sin\theta)^{2n-2}d\theta\\
 &=
\frac{1}{2}\int_{-\pi}^{\pi}\Delta_{\delta}(r\cos\theta)\cos\theta(\sin\theta)^{2n-2}d\theta\\ &=
-\frac{\delta}{2\pi}\int_{-\pi}^{\pi}
\sum_{k=1}^{+\infty}\frac{\sin(2k\pi\frac{r\cos\theta}{\delta}+k\pi)}{k}\cos\theta(\sin\theta)^{2n-2}d\theta\\
 &= -\frac{\delta}{2\pi}
\sum_{k=1}^{+\infty}\frac{(-1)^{k}}{k}\int_{-\pi}^{\pi}\sin(2k\pi\frac{r}{\delta}\cos\theta)\cos\theta(\sin\theta)^{2n-2}d\theta\\
&= -\frac{\delta}{\pi}
\sum_{k=1}^{+\infty}\frac{(-1)^{k}}{k}\sum_{m=0}^{n-1}(-1)^{m}L_{m}J_{2m+1}(2k\pi\frac{r}{\delta})\\
&= -\frac{\delta}{\pi}\cdot L_{0}\cdot 2^{n-1}n!\sum_{k=1}^{+\infty}\frac{(-1)^{k}}{k}
\frac{1}{(2k\pi\frac{r}{\delta})^{n-1}}\cdot J_{n}(2k\pi\frac{r}{\delta})\\
&=-\frac{1}{\pi^n}\cdot \frac{\delta^n}{r^{n-1}}\cdot L_0\cdot n!\cdot \sum_{k=1}^\infty \frac{(-1)^k}{k^n}J_n(2k\pi \frac{r}{\delta}) \\
&=-\frac{1}{\pi^n}\cdot \frac{\delta^n}{r^{n-1}}\cdot \frac{\pi}{2^{2n-2}}\cdot {2n-2\choose n-1} \cdot (n-1)!\cdot \sum_{k=1}^\infty \frac{(-1)^k}{k^n}J_n(2k\pi \frac{r}{\delta}).
\end{aligned}
\end{equation}
In the fourth equality, we use the formula
$$\sin(x\cos\theta)=2\sum_{m=0}^{+\infty}(-1)^m\cos((2m+1)\theta) J_{2m+1}(x)$$
and the orthogonality of the systems $\{\cos kx\}_{k=0}^{+\infty}$ on the interval $[-\pi, \pi]$. We use (\ref{eq:le21})  in the fifth  equality.
To this end, according to (\ref{eq:zhuanbian}), we only need to estimate
$$
\left|\sum_{k=1}^{+\infty}\frac{(-1)^{k}}{k^{n}}J_{n}(2k\pi\frac{r}{\delta})\right|.
$$
In fact, note that
\begin{equation}\label{eq:zhuanbian1}
|J_{n}(2\pi\frac{r}{\delta})|-\sum_{k=2}^{+\infty}\frac{1}{k^{n}}
|J_{n}(2k\pi\frac{r}{\delta})|\,\,\leq\,\, \left|\sum_{k=1}^{+\infty}\frac{(-1)^{k}}{k^{n}}
J_{n}(2k\pi\frac{r}{\delta})\right|\,\,\leq\,\,\sum_{k=1}^{+\infty}\frac{1}{k^{n}}
|J_{n}(2k\pi\frac{r}{\delta})|.
\end{equation}
We first consider $|J_{n}(2\pi\frac{r}{\delta})|-\sum_{k=2}^{+\infty}\frac{1}{k^{n}}
|J_{n}(2k\pi\frac{r}{\delta})|$.
Using the asymptotic estimate for $J_{n}(x)$ in Theorem \ref{le:Jvx},
we have
\begin{equation}\label{eq:jnexp}
\begin{aligned}
\abs{J_n(2\pi \frac{r}{\delta})}&=\abs{\frac{1}{\pi }\sqrt{\frac{\delta}{r}}\cos(2\pi \frac{r}{\delta}-\omega_n)+\theta c \mu (2\pi \frac{r}{\delta})^{-3/2}}\\
&=\abs{\frac{1}{\pi }\sqrt{\frac{\delta}{r}}\cos(2\pi \epsilon-\frac{(2n+1)\pi}{4})+\theta c \mu (\frac{1}{2\pi} \frac{\delta}{r})^{3/2}}
\end{aligned}
\end{equation}
which implies that
\begin{eqnarray*}
\abs{J_n(2\pi \frac{r}{\delta})}\geq \frac{4}{5}\abs{\frac{1}{\pi }\sqrt{\frac{\delta}{r}}\cos(2\pi \epsilon-\frac{3\pi}{4})}
\end{eqnarray*}
provided that $R=\frac{r}{\delta}$ is big enough.
On the other hand,
\begin{equation}\label{eq:onthe}
\begin{aligned}
\sum_{k=2}^{+\infty}\frac{1}{k^{n}}
|J_{n}(2k\pi\frac{r}{\delta})|=&\sum_{k=2}^{+\infty}\frac{1}{k^{n}}\abs{\frac{1}{\pi }\sqrt{\frac{\delta}{kr}}\cos(2\pi k\frac{r}{\delta}-\omega_n)+\theta c \mu (2\pi k\frac{r}{\delta})^{-3/2}}\\
=&\sum_{k=2}^{+\infty}\frac{1}{k^{n}}\abs{\frac{1}{\pi }\sqrt{\frac{\delta}{kr}}\cos(2\pi k\epsilon-\frac{(2n+1)\pi}{4})+\theta c \mu (2\pi k\frac{r}{\delta})^{-3/2}}.
\end{aligned}
\end{equation}
Therefore,  according to (\ref{eq:jnexp}),
\begin{eqnarray*}
\sum_{k=2}^{+\infty}\frac{1}{k^{n}}
|J_{n}(2k\pi\frac{r}{\delta})|\,\,\leq\,\, {\frac{5}{4\pi }\sqrt{\frac{\delta}{r}}\sum_{k=2}^{+\infty}\frac{1}{k^{\frac{2n+1}{2}}}}
\end{eqnarray*}
provided that $R=\frac{r}{\delta}$ is big enough.
Combining  above results,  we obtain that
\begin{equation}\label{eq:zhuanbian2}
|J_{n}(2\pi\frac{r}{\delta})|-\sum_{k=2}^{+\infty}\frac{1}{k^{n}}
|J_{n}(2k\pi\frac{r}{\delta})|\,\,\geq\,\,\frac{1}{\pi }\sqrt{\frac{\delta}{r}}\left( \frac{4}{5}\abs{\cos(2\pi \epsilon-\frac{3\pi}{4})}-\frac{5}{4}{\sum_{k=2}^{+\infty}\frac{1}{k^{\frac{2n+1}{2}}}}\right),
\end{equation}
provided that $R=\frac{r}{\delta}$ is big enough. When $1/4\leq \epsilon\leq 1/2$ and $n\geq 2$,
\begin{eqnarray}
M_{1}:=\frac{4}{5}\abs{\cos(2\pi \epsilon-\frac{3\pi}{4})}-\frac{5}{4}{\sum_{k=2}^{+\infty}\frac{1}{k^{\frac{2n+1}{2}}}}\geq
\frac{4}{5}\cdot\frac{\sqrt{2}}{2}-\frac{5}{4}{\sum_{k=2}^{+\infty}\frac{1}{k^{\frac{5}{2}}}}\approx 0.138>0.\nonumber
\end{eqnarray}
Combining (\ref{eq:zhuanbian}), (\ref{eq:zhuanbian1}) and (\ref{eq:zhuanbian2}), we obtain the left side of (\ref{eq:thlowbup}).
Similarly, based on (\ref{eq:onthe}), we have
$$\abs{\sum_{k=1}^{+\infty}\frac{(-1)^{k}}{k^{n}}
J_{n}(2k\pi\frac{r}{\delta})}\,\,\leq\,\, \sum_{k=1}^{+\infty}\frac{1}{k^{n}}
\abs {J_{n}(2k\pi\frac{r}{\delta})}\,\,\leq\,\, \frac{5}{4}\frac{1}{\pi}\sqrt{\frac{\delta}{r}}\sum_{k=1}^{+\infty}\frac{1}{k^{\frac{2n+1}{2}}}
$$
provided that $R=\frac{r}{\delta}$ is big enough, which implies the right side of (\ref{eq:thlowbup}).

Now let us turn to
$\int_{0}^{\pi}\Delta_{\delta}(r\cos\theta)\cos\theta(\sin\theta)^{2n-1}d\theta$. Similar with the above,we have
 \begin{equation}\label{eq:mideq}
 \begin{aligned}
&\int_{0}^{\pi}\Delta_{\delta}(r\cos\theta)\cos\theta(\sin\theta)^{2n-1}d\theta\\ &=
-\frac{\delta}{\pi}\int_{0}^{\pi}
\sum_{k=1}^{+\infty}\frac{\sin(2k\pi\frac{r\cos\theta}{\delta}+k\pi)}{k}\cos\theta(\sin\theta)^{2n-1}d\theta\\
&= -\frac{\delta}{\pi}
\sum_{k=1}^{+\infty}\frac{(-1)^{k}}{k}\int_{0}^{\pi}\sin(2k\pi\frac{r}{\delta}\cos\theta)\cos\theta(\sin\theta)^{2n-1}d\theta\\
&= -\frac{2\delta}{\pi}
\sum_{k=1}^{+\infty}\frac{(-1)^{k}}{k}\sum_{m=0}^{+\infty}(-1)^{m}D_{m}J_{2m+1}(2k\pi\frac{r}{\delta})\\
&= -\frac{2\delta}{\pi}\sqrt{\pi}2^{n-\frac{3}{2}}(n-1)!\sum_{k=1}^{+\infty}\frac{(-1)^{k}}{k}
\frac{1}{(2k\pi\frac{r}{\delta})^{n-\frac{1}{2}}}J_{n+\frac{1}{2}}(2k\pi\frac{r}{\delta})\\
&=-\frac{(n-1)!}{\pi^{n}}\frac{\delta^{n+\frac{1}{2}}}{r^{n-\frac{1}{2}}}\sum_{k=1}^{+\infty} \frac{(-1)^k}{k^{n+\frac{1}{2}}}J_{n+\frac{1}{2}}(2k\pi \frac{r}{\delta}),
\end{aligned}
\end{equation}
where we use (\ref{eq:le211}) in Lemma \ref{le:2.2} for the fourth equality. Using the asymptotic estimate for $J_{n+\frac{1}{2}}(x)$ in Theorem \ref{le:Jvx}, similarly with the above, we can show that
\begin{equation}\label{eq:mideq1}
\frac{1}{\pi}\sqrt{\frac{\delta}{r}}\left(\frac{7}{8}|\cos(2\pi\epsilon-\frac{1}{2}\pi)|-\frac{8}{7}\sum_{k=2}^{+\infty}\frac{1}{k^{n+1}}\right)\leq|\sum_{k=1}^{+\infty} \frac{(-1)^k}{k^{n+\frac{1}{2}}}J_{n+\frac{1}{2}}(2k\pi \frac{r}{\delta})|\leq \frac{8}{7}\frac{1}{\pi}\sqrt{\frac{\delta}{r}}\sum_{k=1}^{+\infty}\frac{1}{k^{n+1}}
\end{equation}
provided that $R=\frac{r}{\delta}$ is big enough. When $1/6\leq \epsilon \leq 1/3$ and $n\geq 1$,
\begin{eqnarray}
M_{2}:=\frac{7}{8}|\cos(2\pi\epsilon-\frac{1}{2}\pi)|-\frac{8}{7}\sum_{k=2}^{+\infty}\frac{1}{k^{n+1}}\geq\frac{7}{8}\cdot\frac{\sqrt{3}}{2}-\frac{8}{7}
\sum_{k=2}^{+\infty}\frac{1}{k^{2}}\approx 0.02>0.\nonumber
\end{eqnarray}
Combing (\ref{eq:mideq}) and (\ref{eq:mideq1}), we arrive at (\ref{eq:thupbound}).
\end{proof}
We now can state the proof of the main theorem.
\begin{proof}[Proof of Theorem \ref{th:main}]
The idea to prove Theorem \ref{th:main} is similar to one of proving Theorem \ref{th:lowbound} in \cite{WX} with using
Lemma \ref{le:3.3} to estimate $\lim_{m\rightarrow \infty}E_\delta(x,\F_m)$. We state the
proof of (i) for the completeness. In fact, (ii) can be proved using a similar method.

We denote the
number of the non-zero entries in $x$ by $\|x\|_0$, i.e.,
$$
\|x\|_0 \,\,:=\,\,\#\{j:x_j\neq 0\}.
$$
 The proof is by induction on $\|x\|_0$. Note that
\begin{eqnarray*}
\lim_{m\rightarrow \infty}E_\delta(x,\F_m)
&=&\lim_{m\rightarrow
\infty}\|\frac{d}{N_m}\sum_{j=1}^{N_m}\Delta_\delta(x\cdot e_j
)e_j \|\\
&=&d\, \|\int_{{\bf z}\in \MS^d}\Delta_\delta(x\cdot {\bf z}
){\bf z}d\omega \|.
\end{eqnarray*}
 We begin with $\|x\|_0=1$.
 Without loss of generality, we suppose
$x=[x_1,0,\ldots,0]^T\in \R^{d}$ and consider
$\lim_{m\rightarrow \infty}E_\delta(x,\F_m)$. By the
sphere coordinate system, each ${z}=[z_1,\ldots,z_{d}]\in \MS^{d-1}$
can be written in the form of
\begin{eqnarray*}
[\cos\theta_1,\sin\theta_1\cos\theta_2,
\sin\theta_1\sin\theta_2\cos\theta_3,\ldots,\sin\theta_1\cdots
\sin\theta_{d-1}]^\top,
\end{eqnarray*}
where $\theta_1\in [0,\pi)$ and $\theta_j\in [-\pi,\pi), 2\leq
j\leq d-1$. To state conveniently, we set
$$
\Theta\,\,:=\,\, [0,\pi)\times \underbrace{[-\pi, \pi)\times
\cdots \times [-\pi,\pi)}_{d-2}, \qquad {S}_t(\theta):=\prod_{j=1}^{t}\sin\theta_j
$$
and
$$
J_d(\theta):=\left|(\sin\theta_1)^{d-2}(\sin\theta_2)^{d-3}\cdots
(\sin\theta_{d-2})\right|.
$$
 Noting that
$$
d\omega= { J}_d(\theta) d\theta_1\cdots d\theta_{d-1}\quad
\mbox{ and }\quad \int_{{z}\in \MS^{d-1}}\Delta_\delta(x_1  z_1){
z}_jd\omega=0,\quad\quad 2\leq j\leq d-1,
$$
 we have
\begin{eqnarray*}
\lim_{m\rightarrow \infty}E_\delta(x,\F_m)
&=&d\,\|\int_{{ z}\in \MS^{d-1}}\Delta_\delta(x\cdot
{z} ){z}d\omega \|\\
&=& d\,\left|\int_{{z}\in \MS^{d-1}}\Delta_\delta(x_1
{z}_1 ){z}_1d\omega \right|\\
&=& d\,\left|\int_{\theta\in
\Theta}\Delta_\delta(x_1\cos\theta_1)\cos\theta_1(\sin\theta_1)^{d-2}\right.\\
& &\qquad\left. |(\sin\theta_2)^{d-3}\cdots (\sin\theta_{d-2})| d \theta_1\cdots
d\theta_{d-1}\right|\\
&\geq & C_{1,d}\cdot\delta^{(d+1)/2}/|x_1|^{(d-1)/2}
\end{eqnarray*}
where the last inequality  follows from Lemma \ref{le:3.3}.

For the induction step, we suppose that the conclusion holds for
the case where $\|x\|_0\leq k$. We now consider $\|x\|_0 \leq
k+1$. Without loss of generality, we suppose $x$ is in the form of
$[0,\ldots,0,x_{d-k},\ldots,x_{d}]\in \R^{d}$. We can write
$[x_{d-1},x_{d}]$ in the form of $(r_0\cos\varphi_0,r_0\sin\varphi_0)$,
where $r_0\in\R_+$ and $\varphi_0\in [0,2\pi)$. Then
$$
x\cdot {z} =\sum_{j=d-k}^{d-2}x_jS_j(\theta)\cos\theta_j+
r_0\sin\theta_1\cdots \sin\theta_{d-2}\cos(\theta_{d-1}-\varphi_0)=:{
T}(\varphi_0).
$$
A simple observation is
\begin{eqnarray*}
& &\left(\int_{\theta\in\Theta}\Delta_\delta({ T}(\varphi_0)){
S}_{d-2}(\theta){ J}_d(\theta)\cos\theta_{d-1}
d\theta\right)^2+\left(\int_{\theta\in\Theta}\Delta_\delta({
T}(\varphi_0)){ S}_{d-2}(\theta){ J}_{d}(\theta) \sin\theta_{d-1}
d\theta\right)^2\\
&=&\left(\int_{\theta\in\Theta}\Delta_\delta({ T}(0)){
S}_{d-2}(\theta){ J}_d(\theta)\cos\theta_{d-1}
d\theta\right)^2+\left(\int_{\theta\in\Theta}\Delta_\delta({
T}(0)){ S}_{d-2}(\theta){ J}_d(\theta)\sin\theta_{d-1}
d\theta\right)^2.
\end{eqnarray*}
Then we have
\begin{eqnarray*}
& &\lim_{m\rightarrow \infty}E_\delta(x,\F_m)
=d\|\int_{{z}\in \MS^{d-1}}\Delta_\delta(x\cdot
{z} ){z}d\omega \|\\
&=&d\left(\sum_{t=d-k}^{d-1}(\int_{\theta\in\Theta}\Delta_\delta(T(\varphi_0))S_{t-1}(\theta)J_d(\theta)\cos\theta_t
d\theta)^2+(\int_{\theta\in\Theta}\Delta_\delta(T(\varphi_0))S_{d-1}(\theta)J_d(\theta)
d\theta)^2\right)^{1/2}\\
&=&d\left(\sum_{t=d-k}^{d-1}(\int_{\theta\in\Theta}\Delta_\delta(T(0))S_{t-1}(\theta)J_d(\theta)\cos\theta_t
d\theta)^2+(\int_{\theta\in\Theta}\Delta_\delta(T(0))S_{d-1}(\theta)J_d(\theta)
d\theta)^2\right)^{1/2}\\
&\geq & C_{1,d}\cdot \delta^{(d+1)/2}/r^{(d-1)/2}
\end{eqnarray*}
where the last inequality follows from the fact $\|x\|_0\leq k$
provided $\varphi_0=0$.
\end{proof}

\end{document}